\numberwithin{equation}{section}
\def\swappedhead#1#2#3{%
  \thmnumber{\@upn{\the\thm@headfont #2\@ifnotempty{#1}{.~}}}%
  \thmname{#1}%
  \thmnote{ {\the\thm@notefont(#3)}}}
\theoremstyle{plain}
\newtheorem{thm}{Theorem}[section]%
\newtheorem{prop}[thm]{Proposition}%
\theoremstyle{definition}
\newtheorem{question}[thm]{Question}%
\newcommand*{\defeq}{\mathrel{\vcenter{\baselineskip0.5ex \lineskiplimit0pt
                     \hbox{\scriptsize.}\hbox{\scriptsize.}}}%
                     =}
\newcommand{\eqdef}{=\mathrel{\vcenter{\baselineskip0.5ex \lineskiplimit0pt
                     \hbox{\scriptsize.}\hbox{\scriptsize.}}}}
\newcommand{\eps}{\varepsilon}
\renewcommand{\theta}{\vartheta}
\renewcommand{\phi}{\varphi}
\newcommand{\PP}{\mathcal{P}}
\newcommand{\C}{{\mathbb{C}}}
\newcommand{\Ch}{\hat{\C}}
\newcommand{\D}{{\mathbb{D}}}
\newcommand{\re}{\operatorname{Re}}
\newcommand{\dist}{\operatorname{dist}}
\newcommand{\sing}{\operatorname{sing}}
\title{Singular orbits and Baker domains}
\begin{document}

\author{Lasse Rempe} 
\address{Dept. of Mathematical Sciences \\
	 University of Liverpool \\
   Liverpool L69 7ZL\\
   UK \\ 
	 \textsc{\newline \indent 
	   \href{https://orcid.org/0000-0001-8032-8580%
	     }{\includegraphics[width=1em,height=1em]{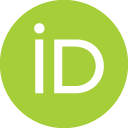} 
	        {\normalfont https://orcid.org/0000-0001-8032-8580}}
	       }}
\email{l.rempe@liverpool.ac.uk}
\subjclass[2020]{Primary 37F10, secondary 30D05, 37F31.}

\begin{abstract}
  We show that there is a transcendental meromorphic function with an invariant Baker domain $U$ such that
    every singular value of $f$ is a super-attracting periodic point. This answers a question of Bergweiler from 
    1993. We also show that $U$ can be chosen to contain arbitrarily large round annuli, centred at zero, of definite modulus. 
    This answers a question of Mihaljevi\'c and the author from 2013, and complements recent 
    work of Bara\'nski et al concerning
    this question. 
\end{abstract}

\maketitle

\epigraph{I long for the Person from Porlock \\
To bring my thoughts to an end, \\
I am becoming impatient to see him\\
I think of him as a friend.}{Stevie Smith}

\section{Introduction}
    Let $f\colon\Ch\to\Ch$ be a rational function of degree at least $2$. The \emph{Fatou set} $F(f)$ consists of those points $z\in\C$ near which the 
     iterates of $f$ form a normal family in the sense of Montel. In other words, these are the points at which the dynamics generated by $f$ 
     is stable under small perturbations. A connected component of $F(f)$ is called a \emph{Fatou component}; such a component is 
     \emph{invariant} if $f(U)\subset U$. An invariant Fatou component may be an immediate basin of attraction of an attracting or parabolic fixed point, 
     or a simply or doubly connected domain on which $f$ is conjugate to an irrational rotation. 
     
   It was shown by Fatou \cite[\S 30--31]{fatoumemoire2} that there is a close relationship between invariant Fatou components and the critical values of $f$.
     Indeed, every attracting or parabolic basin must contain a critical value, and the boundary of any rotation domain is contained in the 
     \emph{postcritical set} $\PP(f)$, i.e., the closure of the union of all critical orbits of $f$. See Lemmas~8.5, 15.7
     and Theorems~10.15, 11.17 of  \cite{milnor}. 

   These relationships carry over to the case of a transcendental meromorphic function $f\colon \C\to\Ch$, with the set of critical values 
    replaced by the set $\sing(f^{-1})$ of critical or asymptotic values of $f$, and the postcritical set by the \emph{postsingular set} defined analogously. 
    In this setting, there is another possible type of invariant Fatou component: An \emph{invariant Baker domain}, in which the iterates 
    converge to the essential singularity at~$\infty$. 
    Bergweiler~\cite[Question~4]{waltersurvey} asked whether there is a relation between 
    $\sing(f^{-1})$ and the boundary of such a Baker domain. He also asked a more precise version of this
    question~\cite[Question~5]{waltersurvey}:
    \begin{question}\label{qu:question5}
     Is it possible that a meromorphic function $f$ has Baker domains if the forward orbit of $z$ is bounded for all
     $z\in \sing(f^{-1})$?
    \end{question}
    We give an affirmative answer.
    \begin{thm}[Baker domains and super-attracting points]\label{thm:question5}
      There is a transcendental meromorphic function $f\colon\C\to\Ch$ with a Baker domain such that
        every point of $\sing(f^{-1})$ is a super-attracting periodic point of period $2$. 
    \end{thm}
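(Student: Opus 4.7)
The plan is to construct $f$ so that $\{0,1\}$ forms a super-attracting $2$-cycle containing every singular value of $f$, while an unbounded forward-invariant region $T\subset\C$ on which $f$ resembles the translation $z\mapsto z+1$ sits inside an invariant Baker domain. The local dynamics around this cycle can be modelled on the entire function $c(z)=\tfrac{1+\cos(\pi z)}{2}$, which has critical set $\mathbb{Z}$, satisfies $c(0)=1$ and $c(1)=0$, and whose critical values all lie in $\{0,1\}$; however $c$ has no Baker domain, so the real task is to graft these local data onto an escaping tract at infinity.

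The construction I would pursue relies on approximation theory for meromorphic functions on unbounded closed sets. Fix an Arakelyan-type closed set $A\subset\C$ whose complement consists of bounded Jordan regions (each of which can host a pole of $f$) and which contains both a neighborhood of the prescribed critical set and an unbounded tract $T$, say a right half-plane with small bounded notches excised. Define a continuous function $\phi\colon A\to\C$ which agrees with $c$ in neighborhoods of the prescribed critical points and with $z\mapsto z+1$ on $T$, interpolated smoothly in between. Approximating $\phi$ uniformly by a transcendental meromorphic $f$ with prescribed poles, and arranging the approximation on $T$ to be exponentially small, one ensures that a forward-invariant absorbing subset of $T$ is contained in an invariant Baker domain $U$ of $f$.

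The main obstacle is controlling $\sing(f^{-1})$ exactly. Approximation alone places critical values of $f$ only near, not at, $0$ and $1$, and may produce spurious critical points or finite asymptotic values in directions where $f$ escapes but is not yet translation-like. To correct this I would perform a quasiconformal surgery in disjoint small neighborhoods of each critical point of the approximant, moving every critical value exactly onto $0$ or $1$, and then straighten the resulting $f$-invariant Beltrami coefficient via the measurable Riemann mapping theorem. With small dilatation and support disjoint from the absorbing part of $T$, the Baker domain $U$ survives the straightening. To guarantee that finite asymptotic values also lie in $\{0,1\}$, the tract structure produced by the construction should be of bounded (Speiser-class) type, with each tract mapping univalently over a small neighborhood of $0$ or $1$. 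A final verification then shows that $f$ is transcendental meromorphic, has an invariant Baker domain, and satisfies $\sing(f^{-1})\subset\{0,1\}$, so that every singular value is a super-attracting periodic point of period $2$, as required.
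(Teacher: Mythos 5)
Your plan aims at something strictly stronger than the theorem, and that stronger statement is false. You propose to arrange $\sing(f^{-1})\subset\{0,1\}$ with $\{0,1\}$ a super-attracting $2$-cycle, so that the postsingular set $\PP(f)$ would be the bounded set $\{0,1\}$. But for a transcendental meromorphic function with an invariant Baker domain this cannot happen: by \cite[Theorem~1.5]{lassehelenawandering} (quoted in the introduction of this paper as the meromorphic analogue of \cite[Theorem~3]{bergweilerinvariantdomains}) there must exist postsingular points $p_n\in\PP(f)$ with $\lvert p_n\rvert\to\infty$ and $\dist(p_n,U)/\lvert p_n\rvert\to 0$; in particular $\PP(f)$ is unbounded. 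The point of the theorem, and of Bergweiler's question, is only that each individual singular \emph{orbit} is bounded, while the set $\sing(f^{-1})$ itself must be unbounded. The paper's construction respects this: it inserts poles into a sequence of discs $D_j$ marching to infinity, producing infinitely many critical points $c$ (hence infinitely many, unbounded, singular values), each satisfying $f^2(c)=c$; no finite asymptotic values arise because every curve to infinity meets the region where the map equals the linear model $z\mapsto\mu z$. So your guiding picture of a single finite super-attracting cycle absorbing all of $\sing(f^{-1})$ has to be abandoned, not merely implemented more carefully.

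Even setting aside this fatal obstruction, the two technical engines of your sketch are not in working order. Uniform approximation by meromorphic functions on a closed set $A$ requires the target to be holomorphic on the interior of $A$, so the ``smooth interpolation'' between $c(z)=(1+\cos\pi z)/2$ and $z\mapsto z+1$ cannot be approximated on any region with interior; and wherever $A$ fails to cover the plane (the holes, the interpolation zones, neighbourhoods of $\partial A$) the approximant can acquire critical points with arbitrary critical values and possibly finite asymptotic values, so the assertion that the result ``should be of Speiser type'' is exactly the thing that needs a construction, not a wish. Likewise, the corrective surgery plus straightening needs an $F$-invariant Beltrami coefficient of uniformly bounded dilatation, which forces control of how often forward orbits revisit the support of the dilatation (this is condition~(b) in Proposition~\ref{prop:qcsurgery} of the paper, arranged there via the sets $V_j\subset W_j$); with your support placed in neighbourhoods of points of the attracting cycle itself, orbits re-enter the support infinitely often and the dilatation is not controlled, and moving infinitely many uncontrolled critical values ``exactly onto $0$ or $1$'' with uniformly bounded dilatation is not justified. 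The paper avoids all of this by starting from the explicit linear model, performing the surgery only in the discs $D_j$ with the critical values of $h_j$ placed in $D_{j+1}$ and sent back by $h_{j+1}$, so that the period-$2$ property, the absence of asymptotic values, and the orbit control of the dilatation are built in by hand.
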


  Regarding~\cite[Question~4]{waltersurvey}, Bergweiler \cite[Theorem~3]{bergweilerinvariantdomains}
     obtained the following answer when $f$ is a transcendental \emph{entire} function: If 
     $U$ contains no critical or asymptotic values of $f$, then
     there exists a sequence $(p_n)_{n=0}^{\infty}$ of postsingular points $p_n\in \PP(f)$
     of $f$ such that 
       \begin{enumerate}[(a)]
        \item $\lvert p_n\rvert \to \infty$;
        \item $\frac{\dist(p_n,W)}{\lvert p_n\rvert} \to 0$; and
        \item $\lim_{n\to\infty} \lvert p_{n+1}\rvert / \lvert p_n\rvert = 1$.\label{item:bergweilerestimate}
        \end{enumerate}
    On the other hand~\cite[Theorem~1]{bergweilerinvariantdomains}, there is an entire function with a Baker domain $U$ such that $\dist( \PP(f), \partial U)>0$, where
    $\dist$ denotes Euclidean distance. 

   For Baker domains of general transcendental \emph{meromorphic} functions, it was shown in 
      \cite[Theorem~1.5]{lassehelenawandering} that an analogue of \cite[Theorem~3]{bergweilerinvariantdomains} 
      holds,
      without the hypothesis that $U$ contains no critical or asymptotic values, where condition~\ref{item:bergweilerestimate} is replaced by the weaker 
        \begin{equation}
            \limsup_{n\to\infty} \frac{\lvert p_{n+1}\rvert}{\lvert p_n\rvert} < \infty.\label{eqn:weakerestimate}
       \end{equation}
     The article~\cite{lassehelenawandering} also poses the following question. 
       \begin{question}[{\cite[Remark on p.~1603]{lassehelenawandering}}]\label{qu:baker}
         Can~\eqref{eqn:weakerestimate} be replaced by~\ref{item:bergweilerestimate}?
       \end{question}
       In \cite[Theorem~A]{baranskietalsingularities}, it is shown that the answer is positive if $\C\setminus U$ has an unbounded connected component. 
       Here we show that the answer to Question~\ref{qu:baker} is negative in general. 
      
       \begin{thm}[Annuli of definite modulus]\label{thm:question4}
         Let $\rho>0$. Then the function in Theorem~\ref{thm:question5} can be chosen such that there is 
         a sequence $R_j\to\infty$ with
           \begin{equation}\label{eqn:largeannuli} \{z\in\C\colon R_j<\lvert z\rvert < \rho R_j \}\subset U \end{equation}
           for all $j$.
       \end{thm}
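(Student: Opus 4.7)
The plan is to refine the construction establishing Theorem~\ref{thm:question5} so as to satisfy the additional geometric requirement~\eqref{eqn:largeannuli}. I expect Theorem~\ref{thm:question5} to be proved by an iterative approximation scheme in which $f$ is built as a meromorphic limit of functions prescribed, up to a fast-decaying error, on a nested exhaustion of $\C$; this is the standard method for producing transcendental meromorphic functions with prescribed Baker-domain behavior and controlled singular orbits. My strategy is to embed the annuli
\[ A_j \defeq \{z\in\C \colon R_j \le \lvert z\rvert \le \rho R_j\} \]
into this exhaustion and impose an additional local model on each.

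Concretely, I would fix $R_j\to\infty$ growing rapidly, and at stage $j$ require $f$ to be uniformly close on $A_j$ to a linear map $z\mapsto \lambda_j z$, where $\lambda_j$ is chosen so that $\lambda_j A_j\subset A_{j+1}$ (which forces $\lvert\lambda_j\rvert = R_{j+1}/R_j$). Then $f(A_j)\subset A_{j+1}$, so the forward orbit of every point of $A_j$ passes through every $A_k$ for $k>j$ and in particular escapes to $\infty$. Since the $A_j$ are connected and eventually meet the Baker orbit (by arranging in the base case that $A_{j_0}$ intersects $U$, exploiting that $U$ is unbounded), they must lie in the same Fatou component as $U$, yielding~\eqref{eqn:largeannuli}. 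For the singular-value condition, Cauchy estimates applied to $f(z)-\lambda_j z$ on a slightly enlarged annulus show that $\lvert f'-\lambda_j\rvert$ is small throughout $A_j$, so that no critical points are created there; uniform approximation also cannot introduce new asymptotic values. Hence the super-attracting period-$2$ structure of $\sing(f^{-1})$ from Theorem~\ref{thm:question5} persists.

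The principal obstacle is that each $A_j$ is an annulus, so the approximation step must either allow $f$ to have additional poles inside the bounded hole of $A_j$, or rely on a Runge-type theorem for meromorphic functions with prescribed principal parts. Introducing such poles is harmless for the Fatou dynamics as such, but care must be taken so that the extra poles do not become singular values or interfere with $\PP(f)$; this will require a precise choice of pole locations together with a careful bookkeeping of tolerances. Calibrating these error tolerances along the exhaustion -- decaying fast enough to preserve the previously built super-attracting cycles, their immediate basins containing $\sing(f^{-1})$, and the Baker orbit itself -- is the technical crux.
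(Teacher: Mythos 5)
Your plan does not prove the theorem as stated; it proposes to re-prove Theorem~\ref{thm:question5} from scratch by a Runge/Arakelyan-type approximation scheme, and the key difficulty of that scheme is exactly the point you leave unaddressed. The paper does not construct $f$ by approximation at all, but by quasiconformal surgery on the linear map $f_0(z)=\mu z$ (Proposition~\ref{prop:qcsurgery}), precisely because, as stated in the introduction, surgery is what gives control of \emph{all} of $\sing(f^{-1})$. In your scheme the assertions that ``no critical points are created'' outside the annuli and that ``uniform approximation also cannot introduce new asymptotic values'' are not justified: Cauchy estimates control critical points only on the annuli $A_j$ where you impose the linear model, the poles you must insert in the bounded holes of the $A_j$ create new critical and possibly asymptotic values whose orbits you would have to steer back onto superattracting $2$-cycles (this is the entire content of the paper's Propositions~\ref{prop:qcexistence} and~\ref{prop:hjquestion5}), and limits of meromorphic functions under such schemes can very well acquire asymptotic values. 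Calling this ``the technical crux'' and deferring it means the proof of the statement --- which demands a single function satisfying both Theorem~\ref{thm:question5} and~\eqref{eqn:largeannuli} --- is not actually given. (Your Fatou-component argument, that the $A_j$ lie in the Fatou set because their orbits escape uniformly and lie in $U$ because each $A_j$ meets the unbounded invariant domain $U$, is fine as far as it goes, but it sits on top of the unproved construction.)

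By contrast, the paper's proof of Theorem~\ref{thm:question4} is a one-line modification of the existing construction: the surgery domain $U_0$ for $f_0(z)=\mu z$ already contains arbitrarily large round annuli of modulus $(\log\mu)/2$, the straightening map $\psi$ is conformal on $U_0$ and fixes $0$, so $\psi(U_0)\subset U$ contains arbitrarily large annuli of the same modulus separating $0$ from $\infty$; choosing $\mu=e^{2\Lambda(\rho)}$, where $\Lambda(\rho)$ is the modulus of the Teichm\"uller annulus, each such annulus contains a round annulus $\{R<\lvert z\rvert<\rho R\}$ centred at zero (Proposition~\ref{prop:moduli}). If you want to salvage your approach, the lesson is that the annuli should be extracted from a region on which the construction is already rigid (here, the conformal image of $U_0$), rather than imposed as extra approximation constraints whose interaction with the singular-orbit bookkeeping you would then have to re-verify.
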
 

   The constructions in our paper are inspired by those of Rippon and Stallard~\cite[Theorem~1.2]{rippon_stallard_2006} and Bara\'nski et al~\cite[Theorem~C]{baranskietalabsorbing}. Both of these construct meromorphic functions
   with multiply-connected Baker domains, obtained from 
   an affine map by inserting  poles at a sequence of points tending to infinity. 
  We shall use quasiconformal surgery instead of explicit formulae, which allows us to obtain
   the required control of the singular orbits of $f$.  
       
\subsection*{Notation} $\C$, $\Ch$ and $\D$ denote the complex plane, Riemann sphere and unit disc, respectively. The Euclidean disc of radius $r$ around $z\in\C$ is denoted by
   $D(z,r)$.

\subsection*{Acknowledgements} I am grateful to Bogus{\l}awa Karpi\'nska, whose excellent presentation about~\cite{baranskietalsingularities} at the 
  2020 online conference \emph{On geometric complexity of Julia sets - II} inspired this work. I also thank her and
  Walter Bergweiler for interesting discussions following the talk, and impan and the Banach Centre in Warsaw for
  hosting the above conference. Finally, I am grateful to the referee for their thoughtful comments, which improved the paper.
       
\section{Quasiconformal surgery}\label{sec:surgery}

  The function in Theorem~\ref{thm:question5} is constructed by a ``cut and paste''
    quasiconformal surgery (see \cite[Chapter~7]{brannerfagella}), starting from the linear map
     \begin{equation}\label{eqn:f0} f_0(z) \defeq  \mu z, \end{equation}
     where $\mu>1$. 
      Consider the closed and connected set
         \[ U_0 \defeq \{z\in\C\colon \re z\geq 0\} \cup \bigcup_{j\geq 0} \left\{z\in\C\colon \mu^j \leq \lvert z\rvert \leq \mu^{j+\frac{1}{2}}\right\}, \]
       which is forward-invariant under $f_0$. 
       Also let $D_0$ be a closed disc
           \[ D_0 = \overline{D(\zeta_0,r_0)} \subset \{z\in\C\colon \re z < 0 \text{ and } \sqrt{\mu}<\lvert z\rvert < \mu \} \subset \C\setminus U_0; \] 
       If we define 
        \[ D_j \defeq f_0^j(D_0) = \overline{D(\mu^j \cdot \zeta_0 , \mu^j\cdot r_0)} \eqdef \overline{D(\zeta_j,r_j)},\]
         then all $D_j$ are disjoint from $U_0$, and from each other. 
          
    We construct our functions from $f_0$ by a quasiconformal surgery that inserts poles in the discs $D_j$. The following
      makes this procedure precise.
      \begin{prop}[Quasiconformal surgery of $f_0$]\label{prop:qcsurgery}
        Let $\mu>1$, and let $f_0$, $U_0$ and $(D_j)_{j=0}^{\infty}$ be defined as above.
                Let $K>1$, and let  
            \[ h_j\colon D_j \to \Ch \]
          be a sequence of $K$-quasiregular functions such that each $h_j$ extends continuously 
          to $\partial D_j$ with $h_j=f_0$ on $\partial D_j$. Then 
      \begin{equation}\label{eqn:Fdefn} F\colon \C\to\Ch; \quad z\mapsto \begin{cases} 
                          h_j(z) &\text{if } z\in D_j \\
                          f_0(z) &\text{if }z\notin \bigcup_{j} D_j. \end{cases} \end{equation}
         is $K$-quasiregular, with no finite asymptotic values, and every critical value of $F$ is a critical value of some  $h_j$. 
         
         Suppose furthermore that, for infinitely many $j$, the map $h_j$ is not a homeomorphism
           $h_j\colon D_j\to D_{j+1}$, and that for every $j$ there is an open set $X_j$ with the following properties.
           \begin{enumerate}[(a)]
             \item The dilatation of $h_j$ is supported on $X_j$.
             \item $F$ is conformal on $F^n(X_j)$ for all $n\geq 1$.\label{item:conformalonforwardimages}
           \end{enumerate}
           
          Then there is a quasiconformal homeomorphism $\psi\colon\C\to\C$ with $\psi(0)=0$ such that 
             \begin{equation}\label{eqn:fdefn} f\defeq \psi\circ F \circ \psi^{-1} \end{equation}
             is a transcendental meromorphic function. Moreover, $\psi$ is conformal on $U_0$, and
               $\psi(U_0)$ is contained in an invariant Baker domain $U$ of $f$. 
         \end{prop}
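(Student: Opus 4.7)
The plan is to verify the three claims about $F$ first and then carry out the standard quasiconformal surgery via the measurable Riemann mapping theorem.

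First I would check that $F$ is $K$-quasiregular. This follows since $F = h_j$ is $K$-quasiregular on each $D_j$, $F = f_0$ is holomorphic off $\bigcup_j D_j$, and the two definitions agree on each analytic arc $\partial D_j$, which is quasiconformally removable. For the absence of finite asymptotic values, note the self-similarity $D_j = \mu^j D_0$: each $D_j$ is bounded, so any path $\gamma\colon [0,1)\to\C$ with $\gamma(t)\to\infty$ must contain a sequence $\gamma(t_n)\to\infty$ lying in $\C\setminus\bigcup_j D_j$; there $F(\gamma(t_n)) = \mu\gamma(t_n)\to\infty$, ruling out convergence to a finite value. The statement about critical values is immediate: outside $\bigcup_j D_j$ the map $F = f_0$ is affine with no critical points.

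For the surgery, I would construct an $F$-invariant Beltrami coefficient $\nu$ on $\C$ with $\|\nu\|_\infty\leq k\defeq (K-1)/(K+1)$. Let $\mu_F$ denote the complex dilatation of $F$, supported on $\bigcup_j X_j$. Set $\nu\defeq\mu_F$ on $\bigcup_j X_j$ and define $\nu$ elsewhere by iterated pullback: for $z\notin\bigcup_j X_j$, if the forward orbit $z,F(z),F^2(z),\ldots$ first meets some $X_j$ at time $n\geq 1$, set $\nu(z)$ to be the pullback of $\mu_F(F^n(z))$ through $F^n$; otherwise set $\nu(z)\defeq 0$. Crucially, hypothesis~(b) ensures that $F$ is conformal at $F(z),F^2(z),\ldots, F^n(z)$, so only the final pullback step involves nonconformal behaviour and the norm is preserved throughout, giving $\|\nu\|_\infty\leq k$. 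Since $\nu\equiv 0$ on $U_0$, the quasiconformal map $\psi\colon\C\to\C$ produced by the measurable Riemann mapping theorem (normalized with $\psi(0)=0$) is conformal on $U_0$. The chain rule for Beltrami coefficients then shows that $f\defeq\psi\circ F\circ\psi^{-1}$ has complex dilatation identically zero, i.e.\ is holomorphic $\C\to\Ch$ and hence meromorphic.

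It remains to verify that $f$ is transcendental and that $\psi(U_0)$ sits inside an invariant Baker domain. For transcendence I observe that $h_j|_{\partial D_j} = f_0|_{\partial D_j}$ is a sense-preserving homeomorphism onto $\partial D_{j+1}$, so for a sense-preserving quasiregular map $h_j\colon \overline{D_j}\to\Ch$ the generic numbers of preimages $N_1$ over $D_{j+1}$ and $N_2$ over $\Ch\setminus\overline{D_{j+1}}$ satisfy $N_1 = N_2+1$. Thus $h_j$ is a homeomorphism onto $D_{j+1}$ precisely when $(N_1,N_2)=(1,0)$; otherwise $N_2\geq 1$, and a clopen/connectedness argument on $\Ch\setminus\overline{D_{j+1}}$ forces $\infty\in h_j(D_j)$. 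Hence infinitely many $h_j$ have poles, so $f$ has infinitely many poles and is transcendental. For the Baker domain, direct inspection shows $U_0$ is disjoint from every $D_j$ and satisfies $F(U_0)=f_0(U_0)\subset U_0$, with $F^n(z)=\mu^n z\to\infty$ for all $z\in U_0$. Since $\psi$ extends to a homeomorphism of $\Ch$ fixing $\infty$, one has $f^n(\psi(z))=\psi(\mu^n z)\to\infty$, placing $\psi(U_0)$ inside the Fatou component $U$ on which $f^n\to\infty$ locally uniformly---an invariant Baker domain.

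The main obstacle is constructing the $F$-invariant Beltrami coefficient $\nu$: one has to check that the iterated pullback is well-defined, measurable, and uniformly bounded by $k$, all of which hinge on hypothesis~(b). A secondary point needing care is the topological/degree argument showing that each non-homeomorphic $h_j$ attains $\infty$, without which transcendence would not be guaranteed.
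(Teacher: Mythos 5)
Your proposal is correct and follows essentially the same route as the paper: glueing/removability for quasiregularity, unboundedness of $F$ along any path to infinity, an $F$-invariant Beltrami coefficient obtained by pulling back the dilatation along orbits (with hypothesis~(b) ensuring each orbit meets the non-conformal set at most once, so the bound $k=(K-1)/(K+1)$ is preserved), the measurable Riemann mapping theorem, and transcendence via infinitely many poles forced by the non-homeomorphic $h_j$. Your first-hitting-time pullback and the degree-theoretic argument that a non-homeomorphic $h_j$ must have a pole are just slightly more explicit versions of the paper's grand-orbit extension and its one-line assertion of the same fact.
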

     \begin{proof}
       The claim that $F$ is $K$-quasiregular follows from Royden's glueing lemma~\cite[Lemma~2]{bersmoduli}, or the
        quasiconformal removability of quasiarcs~\cite[Theorem~1.19]{brannerfagella}. 
        Any curve to infinity must
        intersect $U_0$, and therefore $F$ is unbounded on any such curve. In particular, $F$ has no finite asymptotic values.
        As $F=f_0$ on $U_0$, every critical value of $F$ must come from one of the $h_j$. 
        
        Now suppose that the additional conditions hold. Since $h_j$ maps $\partial D_j$ to $\partial D_{j+1}$ in one-to-one
          fashion, if $h_j$ is not a homeomorphism, then $h_j$ must have at least one pole in $D_j$. As this happens
          for infinitely many $j$, we see that $F$ has infinitely many poles. Let $\omega_0\defeq F_0^*(0)$ be the complex dilatation
          of $F$. By~\ref{item:conformalonforwardimages}, the restriction of $\omega_0$ to 
            \[ X \defeq \bigcup_{j,n\geq 0} F^n(X_j) \]
 	is forward-invariant under $F$. 
           As $F$ is meromorphic outside of $X$, we may extend $\omega_0|_X$ by pull-back to the grand orbit of $X$. Extending the
           resulting differential by the standard complex structure, we obtain an $F$-invariant measurable Beltrami differential $\omega$ on all of $\Ch$. 
           Observe that $\omega \equiv 0$ on $U_0$. 
           
         By the measurable Riemann mapping theorem, there is a quasiconformal homeomorphism $\psi\colon\C\to\Ch$ that 
           solves the Beltrami equation for $\omega$. Then $\psi$ is conformal on $U_0$. Moreover, the map $f$ defined by~\eqref{eqn:fdefn} is meromorphic; it is transcendental
           since $F$ has infinitely many poles. Let $z\in \psi(U_0)$; say $z = \psi(\zeta)$ with $\zeta\in U_0$. Then
              \begin{align*}  f_0(z) &= \psi(F(\psi^{-1}(z))) = \psi(F(\zeta)) \in \psi(U_0), \quad\text{and} \\
                f_0^n(z) &=  \psi(F^n(\psi^{-1}(z))) = \psi(F^n(\zeta)) = \psi(f_0^n(\zeta)) \to \infty. \end{align*}
                So $\psi(U_0)$ is contained in a Baker domain
           of $f$, as claimed. 
     \end{proof}

    \begin{prop}[Large annuli in $U$]\label{prop:moduli}
     For every $\rho>1$, there is $\mu>1$ with the following property. Let $f_0(z)=\mu\cdot z$, and let $f$ be obtained 
     from $f_0$ as in Proposition~\ref{prop:qcsurgery}.
     Then $f$ satisfies~\eqref{eqn:largeannuli} for a sequence $R_j\to\infty$.
    \end{prop}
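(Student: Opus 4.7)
The plan is to show that for $\mu$ chosen large (depending on $\rho$ and the dilatation constant $K$ of $\psi$), the image $\psi(A_j)$ of each round annulus $A_j\defeq\{\mu^j\leq|z|\leq\mu^{j+1/2}\}\subset U_0$ contains a round subannulus of ratio at least $\rho$. Since $\psi$ is conformal on $U_0$, we have $\psi(A_j)\subset \psi(U_0)\subset U$, so such subannuli witness~\eqref{eqn:largeannuli}; their inner radii tend to infinity as $j\to\infty$ because $\psi$ fixes $\infty$.

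The key tool is the quasisymmetry of $\psi$. Normalising (by post-composing with a Möbius map if necessary) so that $\psi$ fixes both $0$ and $\infty$, the $K$-quasiconformal self-map $\psi$ of $\Ch$ is $\eta$-quasisymmetric for a distortion function $\eta$ depending only on $K$. I shall use two standard consequences. First, on every circle $\{|z|=r\}$ the modulus $|\psi|$ oscillates by at most a factor $H=H(K)\geq 1$: $\max_{|z|=r}|\psi(z)|\leq H\min_{|z|=r}|\psi(z)|$. Second, there is $c=c(K)>0$ such that $|\psi(z_2)|/|\psi(z_1)|\geq c(|z_2|/|z_1|)^{1/K}$ whenever $|z_2|\geq|z_1|>0$.

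Applying the oscillation bound, the Jordan curve $\psi(\{|z|=\mu^j\})$ lies in the closed disc $\{|w|\leq H|\psi(\mu^j)|\}$; since it bounds the Jordan domain $\psi(\{|z|<\mu^j\})\ni 0$, that domain also lies in this disc. Symmetrically, $\psi(\{|z|=\mu^{j+1/2}\})$ encloses the disc $\{|w|<|\psi(\mu^{j+1/2})|/H\}$, so $\psi(\{|z|>\mu^{j+1/2}\})\subset\{|w|\geq|\psi(\mu^{j+1/2})|/H\}$. Combining these containments,
\begin{equation*}
  \bigl\{H|\psi(\mu^j)|<|w|<|\psi(\mu^{j+1/2})|/H\bigr\}\subset \psi(A_j)\subset U.
\end{equation*}
By the quasisymmetric lower bound, the ratio of this round annulus is at least $c\mu^{1/(2K)}/H^2$. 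Choosing $\mu$ so large that $c\mu^{1/(2K)}/H^2\geq\rho$ and setting $R_j\defeq H|\psi(\mu^j)|$ completes the argument.

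The step requiring most care is assembling the quasisymmetric estimates and passing from oscillation on the boundary circles of $A_j$ to containment of the images of its bounded and unbounded complementary components via the Jordan curve theorem. The dependence of $\mu$ on $K$ is harmless, as $K$ is a parameter of the surgery in Proposition~\ref{prop:qcsurgery} that can be fixed in advance.
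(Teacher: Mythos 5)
Your argument is correct, but it follows a genuinely different route from the paper, and it proves a formally slightly weaker statement. The paper exploits the one piece of information your proof never uses: Proposition~\ref{prop:qcsurgery} guarantees that $\psi$ is \emph{conformal} on $U_0$. Hence each annulus $A_j=\{\mu^j\le|z|\le\mu^{j+1/2}\}\subset U_0$ maps to an annulus in $U$ separating $0$ from $\infty$ of the \emph{same} modulus $(\log\mu)/2$, and a single appeal to the Teichm\"uller annulus (any such separating annulus of modulus at least $\Lambda(\rho)$ contains a round annulus centred at $0$ of modulus $\log\rho$) finishes the proof with $\mu=e^{2\Lambda(\rho)}$, i.e.\ with $\mu$ depending on $\rho$ only. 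You instead discard conformality and treat $\psi$ merely as a global $K$-quasiconformal map fixing $0$ and $\infty$, invoking circular distortion and the quasiconformal growth estimate $|\psi(z_2)|/|\psi(z_1)|\ge c(K)(|z_2|/|z_1|)^{1/K}$; the topological assembly via the Jordan curve theorem (images of the two complementary components of $A_j$ trapped inside and outside round circles) is sound, and these distortion facts are standard, so the argument goes through. The price is that your $\mu$ depends on $K$ as well as $\rho$, whereas the proposition as stated asserts a $\mu$ that works for \emph{every} $f$ obtained as in Proposition~\ref{prop:qcsurgery}, i.e.\ uniformly in $K$; your closing remark correctly observes that this is harmless for Theorems~\ref{thm:question5} and~\ref{thm:question4}, since there $K$ is a free parameter that can be fixed before $\mu$, but it does not give the literal uniform statement. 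In short: your proof is heavier (two quantitative distortion theorems plus a normalisation discussion versus two lines of conformal invariance), is insensitive to where $\psi$ happens to be conformal (which could be useful in situations where no conformality on $U_0$ is available), and loses the independence of $\mu$ from $K$; using the conformality of $\psi|_{U_0}$ together with your Teichm\"uller-type ingredient would collapse your argument exactly to the paper's.
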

    \begin{proof}
      Recall that $U_0$ contains arbitrarily large round annuli of modulus at least $M \defeq (\log \mu)/2$. If
        $M\geq \Lambda(\rho)$, where $\Lambda(\rho)$ is the modulus of the Teichm\"uller 
        annulus~\cite[\S4--11]{ahlforsconformalinvariants}, then any annulus 
        that separates $0$ from $\infty$ and has modulus at least
       $M$ contains a round annulus of modulus $\log \rho$ centred at zero. 
       Hence we can take 
       $\mu = e^{2\Lambda(\rho)}$. 
    \end{proof}

  \section{Proof of Theorem~\ref{thm:question5}}
   To prove Theorem~\ref{thm:question5}, we construct a suitable sequence of functions $h_j$ to use in 
     Proposition~\ref{prop:qcsurgery}. The idea is that each $h_j$ has critical values in $D_{j+1}$, which are then
     mapped back to the original critical points by $h_{j+1}$. We begin with the following, where $f_0$ is again given by~\eqref{eqn:f0}.
             
    \begin{prop}\label{prop:qcexistence}
        Let $\Delta=D(\zeta,r) \subset \C$ be a round disc, and let $K>1$, $0<\eta<r$, and $0<\theta < \mu\cdot \eta$.
          For every $a\in\C\setminus f_0(\Delta)$, there 
           is a $K$-quasiregular map 
               \[ g\colon \Delta\to\Ch \]
              such that
              \begin{enumerate}[(1)]
                \item $g$ extends continuously to $\partial \Delta$, where it agrees with $f_0$.\label{item:continuousextension}
                \item $g(\zeta)=a$.\label{item:correctimageofcentre}
                \item $g$ has exactly two critical points, $c_1$ and $c_2$, with $g(c_1)\neq g(c_2)$.\label{item:twocriticalpoints}
                \item Each critical point $c_j$ satisfies $g(c_j)\in D(f_0(\zeta),\theta)$.\label{item:criticalvalues}
                \item $g$ is meromorphic on $D(\zeta,\eta)$.\label{item:supportofdilatation}
                \item $g$ is injective (and hence quasiconformal) on 
                   $\Delta\setminus D(\zeta,\eta)$, and 
                  $\mu r > \lvert g(z)-f_0(\zeta)\rvert > \theta$ for $r>\lvert z-\zeta\rvert \geq \eta$.\label{item:annulusimage}
               \end{enumerate}
    \end{prop}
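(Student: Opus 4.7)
The plan is to define $g$ in two parts: a meromorphic core on the inner disc $D(\zeta,\eta)$ and a quasiconformal interpolation on the outer annulus $\{\eta\leq |z-\zeta|\leq r\}$, glued continuously along $\partial D(\zeta,\eta)$. The two pieces together will satisfy all of (1)--(6).

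For the meromorphic core it is convenient to normalise. Since $a\in\mathbb{C}\setminus f_0(\Delta)$ we have $f_0(\zeta)\neq a$ and $|f_0(\zeta)-a|\geq \mu r$; set $w=(z-\zeta)/\eta$ and $v=(g-a)/(f_0(\zeta)-a)$. The core problem then becomes: construct a meromorphic $v\colon \mathbb{D}\to\hat{\mathbb{C}}$ with $v(0)=0$, exactly two simple critical points in $\mathbb{D}$ whose distinct critical values lie in $D(1,\delta)$ with $\delta=\theta/|f_0(\zeta)-a|$, and such that $v(\partial\mathbb{D})$ is a Jordan curve in the annulus $\{\delta<|w-1|<\rho\}$, with $\rho=\mu r/|f_0(\zeta)-a|$, enclosing $D(1,\delta)$. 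I would exhibit such a $v$ as an explicit rational function of low degree (degree $2$ or $3$) with a simple pole in $\mathbb{D}$: the centre condition $v(0)=0$ is one linear relation on the coefficients, the two critical-value conditions fix two more, and the remaining parameters are used to place the pole and adjust its residue so that the boundary curve $v(\partial\mathbb{D})$ lies in the prescribed annulus and is close to a round circle. Unwinding the normalisation yields the required meromorphic $g$ on $D(\zeta,\eta)$, verifying properties (2)--(5) and the lower bound in (6) on $\partial D(\zeta,\eta)$.

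On the annulus $A=\{\eta\leq|z-\zeta|\leq r\}$, extend $g$ as a $K$-quasiconformal homeomorphism onto the doubly-connected region bounded by $\gamma:=g(\partial D(\zeta,\eta))$ (inner) and $\partial D(f_0(\zeta),\mu r)$ (outer), with $g=f_0$ on $\partial\Delta$. By tuning the parameters of the core so that $\gamma$ is a $K_0$-quasicircle of some radius in $(\theta,\mu r)$ with $K_0>1$ arbitrarily close to $1$, a standard logarithmic-polar interpolation between the two annuli produces an extension whose dilatation can be made arbitrarily close to $1$, hence $\leq K$. The remaining inequalities in (6) follow because the image of $A$ is then trapped between $\gamma$ and $\partial D(f_0(\zeta),\mu r)$.

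The main obstacle is the parameter analysis for the meromorphic core when $|a-f_0(\zeta)|$ is large. Both $\delta$ and $\rho$ can then be arbitrarily small, so the boundary curve must lie in a narrow annulus around the point $1$ while the critical values cluster in an even smaller sub-disc, yet the target value $v(0)=0$ sits at a fixed distance $1$ from this region in the normalised picture. The meromorphic model must therefore exhibit a large residue at its pole in order to ``jump'' from $v=0$ at the centre to the cluster near $v=1$; verifying that a low-degree rational family has enough free parameters to achieve all these geometric constraints simultaneously, and that the resulting $\gamma$ is quasi-round, is the main computational content of the proof.
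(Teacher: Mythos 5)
Your overall architecture --- a meromorphic core on $D(\zeta,\eta)$ glued along $\partial D(\zeta,\eta)$ to a quasiconformal interpolation on the annulus, with the dilatation of the interpolation forced close to $1$ --- is exactly the paper's. But the heart of the proposition is the existence of the core map, and this is precisely what you do not supply: you reduce it to a constraint-satisfaction problem for an unspecified ``low-degree rational function'' and argue by counting conditions against parameters, then yourself flag the verification (especially when $\lvert a-f_0(\zeta)\rvert$ is large, so that $\delta$ and $\rho$ are tiny) as ``the main computational content''. Parameter counting does not establish that the open geometric constraints --- $v(0)=0$, exactly two simple critical points inside the disc with distinct critical values in $D(1,\delta)$, boundary image trapped in the thin annulus $\{\delta<\lvert v-1\rvert<\rho\}$ and quasi-round --- can be met simultaneously, so as written the proof has a genuine gap. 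A second, smaller imprecision: making $\gamma=g(\partial D(\zeta,\eta))$ a quasicircle with constant near $1$ is not by itself enough to get an interpolation of dilatation near $1$, because the extension must match the \emph{boundary parametrisation} induced by the core on the inner circle (and $f_0$ on the outer one); a badly distorted parametrisation of a perfectly round $\gamma$ would still force large dilatation.

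The paper closes both gaps with one explicit choice that you may find instructive: set $\alpha=a/(r\mu)-\zeta/r$ and take the core to be (an affine conjugate, scaled by $\mu$, of) $\phi_{\alpha,\eps}(z)=z+\eps/(z-p)$ with $p=-\eps/\alpha$. This is a perturbation of the identity: away from the pole it converges uniformly to $\mathrm{id}$ as $\eps\to 0$, so the resulting $g_{\eps}$ converges to $f_0$ near $\partial D(\zeta,\eta)$, which simultaneously yields the bounds in~(6) on the inner circle and makes the boundary correspondence converge to that of $f_0$, so that a radial (or Lehto-type) interpolation has dilatation tending to $1$. The critical points sit at $p\pm\sqrt{\eps}$ with critical values $p\pm 2\sqrt{\eps}$, hence the critical values of $g_{\eps}$ automatically land in $D(f_0(\zeta),\theta)$ for small $\eps$; and $\phi_{\alpha,\eps}(0)=\alpha$ gives $g_{\eps}(\zeta)=a$. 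In particular the ``large jump'' you worry about is not produced by a large residue (the residue $\eps$ stays small) but by placing the pole at distance $\eps/\lvert\alpha\rvert$ from the centre; this is the missing idea that turns your outline into a proof, with all conditions holding simply for $\eps$ sufficiently small.
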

    \begin{proof}
      Set \[ \alpha \defeq \frac{a}{r\mu} - \frac{\zeta}{r} \neq 0. \]
         Let $\eps>0$, and consider the map
             \[ \phi = \phi_{\alpha,\eps}\colon \D\to \Ch; \quad z\mapsto z + \frac{\eps \alpha}{\alpha z+\eps} = z + \frac{\eps}{z-p}, \] 
         where $p=-\eps/\alpha$. Then  $\phi(0)=\alpha$ and $\phi(p)=\infty$. By direct calculation,            
            $\phi$ has two simple critical points at 
             $p \pm \sqrt{\eps}$, with critical values at 
           $p\pm 2\sqrt{\eps}$.        
           
          Now define $A\colon\Ch\to\Ch$ to be the affine map taking $\D$ to $\Delta$; i.e., 
             $A(z) = r\cdot z + \zeta$. Define 
              \[   g_{\eps}\colon D(\zeta,\eta)\to \Ch; \quad z\mapsto
                                      \mu\cdot A\bigl(\phi_{\alpha,\eps}\bigl(A^{-1}(z)\bigr)\bigr).  \]
         For all sufficiently small $\eps$, 
            $g_{\eps}$ satisfies~\ref{item:twocriticalpoints},~\ref{item:criticalvalues} and~\ref{item:supportofdilatation}.
            Furthermore, 
             \[ g_{\eps}(\zeta) = \mu\cdot (r\alpha + \zeta) = a, \]
         so $g$ also satisfies~\ref{item:correctimageofcentre}. 
         
         Set $\Delta_{\eta} \defeq D(\zeta,\eta)$. 
         As $\eps\to 0$, $g_{\eps}\to f_0$ uniformly on a neighbourhood of $\partial\Delta_{\eta}$. In particular, for sufficiently
           small $\eps$, $g_{\eps}$ is conformal on a neighbourhood of $\partial \Delta_{\eta}$, and satisfies 
           \[ \mu r > \lvert g_{\eps}(z) - f_0(\zeta)\rvert > \theta \] 
           when $z\in\partial \Delta_{\eta}$. It follows that there is a quasiconformal homeomorphism 
             $G$ that maps the round annulus $\Delta \setminus \overline{\Delta_{\eta}}$ to the annulus
               bounded by $f_0(\partial\Delta)$ and $g_{\eps}(\partial \Delta_{\eta})$, and which
               agrees with $f_0$ on $\partial\Delta$ and with $g_{\eps}$ on $\partial \Delta_{\eta}$; see~\cite{lehtoannuli}. 
               We define $g$ to agree with $g_{\eps}$ on $D_{\eta}$ and with $G$ outside; then $g$
               satisfies~\ref{item:continuousextension}--\ref{item:annulusimage}. 

       Finally, we claim that the dilatation of $G$, and thus of $g$, tends to 1 as $\eps\to 0$. Indeed, this follows readily from the construction in~\cite{lehtoannuli}.
       Alternatively,
       for small $\eps$, we can define $G$ directly by linear interpolation on each radius of $\Delta \setminus \overline{\Delta_{\eta}}$. An elementary
       calculation shows that the dilatation of $G$ tends to $1$. 
       Hence, for sufficiently small $\eps$,
          $g$ is $K$-quasiconformal as claimed.
    \end{proof}

     For the remainder of the paper, let $D_j$ denote the discs introduced in Section~\ref{sec:surgery}. 
          
       \begin{prop}\label{prop:hjquestion5}
         For any $K>1$, there is a sequence of $K$-quasiregular maps
              \[ h_j \colon D_j\to\Ch \] with the following properties for all $j$.
            \begin{enumerate}[(a)]
              \item For every $j$, the map $h_j$ has at least one critical point.\label{item:notinjective} 
              \item If $c$ is a critical point of $h_j$, then $h_{j}(c)\in D_{j+1}$ and $h_{j+1}(h_j(c))=c$.
            \end{enumerate}
            Furthermore, for each $j$ there are open sets $V_j\subset W_j\subset D_j$ such that
            \begin{enumerate}[(a),resume]
              \item $D_j\setminus V_j$ is compact;\label{item:compactcomplement}
              \item $h_j$ is injective on $W_j$;\label{item:injectiveonWj}
              \item $h_j(z)=f_0(z)$ for $z\in V_j$;\label{item:maponVj}
              \item $h_j(W_{j})\subset V_{j+1}$;\label{item:WjtoVj}
              \item the dilatation of $h_j$ is supported on $W_j$.\label{item:dilatationsupport} 
            \end{enumerate}
       \end{prop}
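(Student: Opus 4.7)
The plan is to construct each $h_j$ by modifying $f_0$ inside a small closed sub-disc $\overline{\Delta^{(j)}} \subset D_j$, applying Proposition~\ref{prop:qcexistence} to $\Delta^{(j)}$ and extending by $f_0$ outside. I would set $V_j := \operatorname{int}(D_j) \setminus \overline{\Delta^{(j)}}$ and $W_j := \operatorname{int}(D_j) \setminus \overline{D(\zeta_\Delta^{(j)}, \eta^{(j)})}$, where $D(\zeta_\Delta^{(j)}, \eta^{(j)}) \subset \Delta^{(j)}$ is the inner disc of Proposition~\ref{prop:qcexistence}(5). These are open and satisfy $V_j \subset W_j \subset D_j$. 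Properties (a), (c), (d), (e), (g) then follow directly from the structure of the modification: (a) from the two critical points guaranteed by Proposition~\ref{prop:qcexistence}(3); (c) as $D_j \setminus V_j$ is the union of $\partial D_j$ with a closed disc; (d) from the injectivity of the Proposition~\ref{prop:qcexistence} modification outside its inner disc (see (6)) combined with the disjointness of $f_0(V_j)$ from the image of the interpolation annulus; (e) by construction; and (g) from the dilatation being supported on the interpolation annulus $W_j \setminus V_j$.

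To secure (f), I would place $\overline{\Delta^{(j+1)}} \subset D(f_0(\zeta_\Delta^{(j)}), \theta^{(j)})$ for a suitable $\theta^{(j)}$ in Proposition~\ref{prop:qcexistence}. The image of the interpolation annulus of $h_j$ lies in $f_0(\Delta^{(j)}) \setminus D(f_0(\zeta_\Delta^{(j)}), \theta^{(j)})$ by Proposition~\ref{prop:qcexistence}(6), so it is disjoint from $\Delta^{(j+1)}$, giving $h_j(W_j) \subset V_{j+1}$. Simultaneously, both critical values $v_1^{(j)}, v_2^{(j)}$ of $h_j$ lie in $D(f_0(\zeta_\Delta^{(j)}), \theta^{(j)})$ by Proposition~\ref{prop:qcexistence}(4), so $\Delta^{(j+1)}$ can be chosen to contain them; condition (b) will then be enforced by the modification on $\Delta^{(j+1)}$.

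For (b), I would use both complex parameters $(\alpha^{(j+1)}, \eps^{(j+1)})$ of the Proposition~\ref{prop:qcexistence} application on $\Delta^{(j+1)}$ to solve the two equations $h_{j+1}(v_k^{(j)}) = c_k^{(j)}$ for $k = 1, 2$. With the explicit $\phi_{\alpha, \eps}(u) = u + \eps/(u - p)$ from the proof of Proposition~\ref{prop:qcexistence}, this reduces to a pair of algebraic equations $\phi(u_k) = w_k$ in rescaled coordinates, which can be solved in closed form for $(p, \eps)$. The main obstacle is to ensure that the resulting $\eps^{(j+1)}$ is small enough for Proposition~\ref{prop:qcexistence} to yield a $K$-quasiregular map, and that the pole $p^{(j+1)}$ lies inside $\Delta^{(j+1)}$. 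I would address this by exploiting the freedom in the center and radius of $\Delta^{(j+1)}$: varying these within the allowed zone $D(f_0(\zeta_\Delta^{(j)}), \theta^{(j)})$, subject to containing $v_1^{(j)}$ and $v_2^{(j)}$, tunes the rescaled pair $(u_k, w_k)$ so that $\eps^{(j+1)}$ lies in the small range required by Proposition~\ref{prop:qcexistence}; this may require choosing $\mu$ sufficiently large at the outset.
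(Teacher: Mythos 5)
There is a genuine gap, and it sits exactly at condition (b), the heart of the proposition. Proposition~\ref{prop:qcexistence} lets you prescribe the image of \emph{one} point only (the centre $\zeta$ of the disc, via property~(2)); your single-disc scheme needs to prescribe the images of \emph{two} points, $v_1\mapsto c_1$ and $v_2\mapsto c_2$, so you are forced inside the proof, into the explicit family $\phi_{\alpha,\eps}$ --- and then $\eps$ is no longer a free small parameter. Writing $u_k=(v_k-\zeta)/r$ and $w_k=(c_k-\mu\zeta)/(\mu r)$ for the rescaled sources and targets in $\Delta^{(j+1)}=D(\zeta,r)$, the two equations $\phi(u_k)=w_k$ determine
\[ \eps \;=\; \frac{(w_1-u_1)(w_2-u_2)(u_1-u_2)}{(w_2-u_2)-(w_1-u_1)},\qquad w_k-u_k=\frac{c_k-\mu v_k}{\mu r}, \]
and since the previous generation is in the same normal form (so $u_1-u_2=2\mu^2(w_1-w_2)$), one gets $\lvert\eps\rvert=\frac{2\mu^2}{2\mu^2-1}\,\lvert w_1-u_1\rvert\,\lvert w_2-u_2\rvert \ \ge\ \bigl(\dist(D_j,D_{j+2})/(\mu r_{j+1})\bigr)^2 \ge \bigl((\mu^{3/2}-1)/(\mu r_0)\bigr)^2$, a positive lower bound depending only on $\mu$ and $D_0$, uniformly in $j$ and in all the freedom you invoke (centre and radius of $\Delta^{(j+1)}$). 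So $\eps$ cannot be made small: for the given $D_0$ (whose radius may be small) the pole and the two critical points $p\pm\sqrt{\eps}$ need not fit inside $\Delta^{(j+1)}$ at all, the new critical values need not lie in $D_{j+2}$ (which your induction requires), and in any case the boundary values of $g_\eps$ on the inner circle differ from those of $f_0$ by a definite amount, so the interpolating annulus map has dilatation bounded away from $1$ --- whereas the proposition demands $K$-quasiregularity for \emph{every} $K>1$. Your escape hatch, ``choosing $\mu$ sufficiently large at the outset,'' is not available: $\mu$ and the discs $D_j$ are fixed data of the statement (and in Theorem~\ref{thm:question4} the choice of $\mu$ is dictated by $\rho$); and even granting it, the simultaneous feasibility of all constraints (pole, critical points and sources in the inner disc, critical values in $D_{j+2}$, dilatation below the prescribed $K$, uniformly in $j$) is precisely the step you leave unverified.

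The paper sidesteps all of this by changing the combinatorics rather than the analytic model: instead of one modification disc per level, it places a separate small disc $\Delta_j^i$ \emph{around each critical value} of $h_{j-1}$ (so the number of surgery discs doubles each generation, $2^j$ discs inside $D_j$), and uses property~(2) of Proposition~\ref{prop:qcexistence} --- the centre of each disc is sent to an arbitrarily prescribed point $a\in\C\setminus f_0(\Delta)$ --- to send each critical value back to its own critical point. Every application then carries only a one-point condition, $\eps$ stays a free parameter that may be taken as small as needed, and $K$-quasiregularity for arbitrary $K>1$ follows, together with the separation properties (their inductive hypotheses (A)--(C)) that give (f). If you repair your scheme in the natural way, by using one disc around each of the two critical values of $h_j$ inside $D_{j+1}$, you arrive at the paper's proof.
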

       \begin{proof}
         The sequence of functions is constructed inductively, in such a way as to ensure the following
          inductive hypotheses. 
           For $j\geq 0$, let $\mathcal{C}_j\subset D_j$ be the set of critical points of $h_{j}$ and 
             $\Omega_{j+1}=h_{j}(\mathcal{C}_j)\subset \C$ the set of critical values. 
             The construction will ensure the following inductive hypotheses for $j\geq 0$:
             \begin{enumerate}[(A)]
               \item $\Omega_{j+1}\subset D_{j+1}$.  \label{item:omegainDj}
               \item $h_j\colon {\mathcal{C}_j}\to\Omega_{j+1}$ is bijective.\label{item:injective}
               \item $h_j(W_j)\subset D_{j+1}$ and $\overline{h_j(W_j)}\cap \Omega_{j+1} = \emptyset$. \label{item:criticalvaluesaway}
              \end{enumerate} 
             To anchor the recursive construction, we also set 
               $\Omega_0 \defeq \{\zeta_0\}$. This means that~\ref{item:omegainDj}
               holds also for $j=-1$.  
         
         Now suppose that $j\geq 0$ is such that $h_i$ has been constructed for $i<j$, satisfying the inductive hypotheses. 
          To define $h_j$, let $\omega_j^1,\dots,\omega_j^m$ be the elements of $\Omega_{j}$. Choose pairwise disjoint
            small discs $\Delta_j^i=D(\omega_j^i,\rho_j^i)\Subset D_j$ centred at the 
             $\omega_j^i$, for $j=1,\dots,m$; this is possible by~\ref{item:omegainDj}. If $j>0$, then by~\ref{item:criticalvaluesaway} we may
             also ensure that  the closures of the $\Delta_j^i$ do not intersect $\overline{h_{j-1}(W_{j-1})}$. 
         Set               
             \begin{equation}\label{eqn:Vjdefn}
               V_j \defeq D_j\setminus \bigcup_{i=1}^m \overline{\Delta^i_j}\supset h_{j-1}(W_{j-1}). \end{equation}    
              
          For each $i=1,\dots,m$, apply Proposition~\ref{prop:qcexistence}
           to $\Delta=\Delta_j^i$, where $a = c_{j-1}^i \in \mathcal{C}_{j-1}$ is such that 
             \begin{equation}\label{eqn:cji} h_{j-1}(c_{j-1}^i)=\omega_j^i,\end{equation}
            if $j>0$, and $c_{-1}^1=0$ otherwise. We may choose $\theta$ and $\eta=\eta^i_j$ 
           arbitrary, subject to the conditions
           given in the proposition. Let $g^i_j$ denote the maps obtained. We define 
             \begin{align}\label{eqn:hjdefn} h_j(z) &\defeq \begin{cases}
                                      f_0(z) &\text{if }z\in V_j;\\
                                      g^i_j(z) &\text{if }z\in \overline{\Delta^i_j}\end{cases} \qquad\text{and}\\
             W_j &\defeq D_j \setminus \bigcup_{i=1}^m \overline{D(\omega_j^i,\eta_j^i)} \supset V_j.\label{eqn:Wjdefn} \end{align}
        We claim  that $h_j$ satisfies~\ref{item:omegainDj}--\ref{item:criticalvaluesaway}.
        For the remainder of the proof, references to \ref{item:continuousextension}--\ref{item:supportofdilatation} shall mean the 
        corresponding properties of the $g^i_j$ established in Proposition~\ref{prop:qcexistence}. 

        The critical points and critical values of $h_j$ are exactly those of the maps $g_j^i$. 
         By~\ref{item:criticalvalues}, the critical values of $g_j^i$ are in 
          $f_0(\Delta_j^i)\subset f_0(D_j) = D_{j+1}$, establishing~\ref{item:omegainDj}. 
          Furthermore, by~\ref{item:twocriticalpoints}, 
           $g_j^i$ has exactly two different critical points,
          with different critical values belonging to $f_0(\Delta_j^i)$. Since different
          $\Delta_j^i$ are pairwise disjoint, and $f_0$ is injective, claim~\ref{item:injective} follows. 
          Finally,~\ref{item:criticalvaluesaway} is an immediate consequence of~\ref{item:criticalvalues} 
          and~\ref{item:annulusimage}.

        This completes the construction of the $h_j$.         By~\ref{item:continuousextension} and Royden's glueing lemma (or the quasiconformal removability
         of quasiarcs), the
          map $h_j$ is indeed a $K$-quasiregular map. It remains to establish~\ref{item:notinjective}--\ref{item:dilatationsupport}.
        \begin{enumerate}[(a)]
          \item Each $h^i_j$ has exactly two critical values,
            and $\# \Omega_0 = 1$. It follows inductively that $\# \Omega_j = 2^j$ for all $j$, and in particular each
             $h_j$ has at least one critical point. 
          \item If $c\in\mathcal{C}_j$ is a critical point of $h_j$, then $c = c_j^i$ for some $i$, by~\eqref{eqn:cji} and~\ref{item:injective}. So by~\ref{item:correctimageofcentre},  
           \[
		h_{j+1}(h_{j}(c)) = h_{j+1}(h_j(c_{j}^i)) = g_{j+1}^i(\omega_{j+1}^i) = c_{j}^i = c. \]
            \item   By definition of $V_j$ in~\eqref{eqn:Vjdefn}, $D_j\setminus V_j$ is compact. 
            \item $h_j|_{W_j}$ is injective by~\eqref{eqn:Wjdefn} and~\ref{item:annulusimage}. 
            \item By~\eqref{eqn:hjdefn}, $h_j=f_0$ on $V_j$. 
            \item By~\eqref{eqn:Vjdefn}, $V_{j+1}\supset h_j(W_j)$. 
            \item By~\ref{item:supportofdilatation}, the dilatation of $h_j$ is supported on $W_j$.\qedhere
        \end{enumerate}
       \end{proof}
         
   \begin{proof}[Proof of Theorems~\ref{thm:question5} and~\ref{thm:question4}]
    Choose $\mu$ as in Proposition~\ref{prop:moduli}, and let $(h_j)_{j=0}^{\infty}$ be as in Proposition~\ref{prop:hjquestion5}.
    Let $F$ be the quasiregular map defined by~\eqref{eqn:Fdefn}. Then $F^2(c)=c$ for every critical point $c$ of $F$. 

    The dilatation of $h_j$ is
     supported on $W_j$, and $h_j$ is conformal on $\overline{V_j}$. Moreover, $F^n(X_j)\subset V_{n+j}$ for all $n$ and $j$.     
     Hence $X_j \defeq W_j\setminus \overline{V_j}$ satisfies the conditions of Proposition~\ref{prop:qcsurgery}.
     Let $f$ be the  meromorphic function~\eqref{eqn:fdefn}; then $f$ has a Baker domain $U$ and satisfies $f^2(c)=c$ for every critical point of $f$. 
      This completes the proof of Theorem~\ref{thm:question5}.
     
     Furthermore, by Proposition~\ref{prop:moduli}, the function also satisfies~\ref{eqn:largeannuli} 
       for a sequence $R_j\to\infty$, proving Theorem~\ref{thm:question4}. 
   \end{proof}

\providecommand{\bysame}{\leavevmode\hbox to3em{\hrulefill}\thinspace}
\providecommand{\href}[2]{#2}

\end{document}